\newtheorem{theorem}{Theorem}[section]
\newtheorem{lemma}[theorem]{Lemma}
\newtheorem{proposition}[theorem]{Proposition}
\newtheorem{conjecture}[theorem]{Conjecture}
\newtheorem{remark}[theorem]{Remark}
\newcommand{\filledbox}{\leavevmode
  \hbox to.77778em{%
  \hfil\vbox to.675em{\hrule width.6em height.6em}\hfil}}
\newcommand{\Rm}{{\mathbb R}}
\newcommand{\eps}{\varepsilon}
\begin{document}
\tabulinesep=1.0mm
\title{Morawetz Estimates Method for Scattering of Radial Energy Sub-critical Wave Equation\footnote{MSC classes: 35L71, 35L05; The author is supported by National Natural Science Foundation of China Programs 11601374, 11771325}}

\author{Ruipeng Shen\\
Centre for Applied Mathematics\\
Tianjin University\\
Tianjin, China}

\maketitle

\begin{abstract}
  In this short paper we consider a semi-linear, energy sub-critical, defocusing wave equation $\partial_t^2 u - \Delta u = - |u|^{p -1} u$ in the 3-dimensional space with $p \in (3,5)$. We prove that if the energy of radial initial data $(u_0, u_1)$ outside a ball of radius $r$ centred at the origin decays faster than a certain rate $r^{-\kappa(p)}$, then the corresponding solution $u$ must scatter in both two time directions. The main tool of our proof is a more detailed version of the classic Morawetz estimate.
\end{abstract}

\section{Introduction}

We consider a defocusing semi-linear wave equation 
\[
 \left\{\begin{array}{ll} \partial_t^2 u - \Delta u = - |u|^{p-1}u, & (x,t) \in \Rm^3 \times \Rm; \\
 u(\cdot, 0) = u_0; & \\
 u_t (\cdot,0) = u_1. & \end{array}\right.\quad (CP1)
\]
This Cauchy problem is locally well-posed for any initial data $(u_0,u_1)$ in the critical Sobolev space $\dot{H}^{s_p} \times \dot{H}^{s_p-1}(\Rm^3)$ with $s_p \doteq 3/2 - 2/(p-1)$, as shown in Lindblad and Sogge's work \cite{ls}. There is also an energy conservation law for suitable initial data:
\[
 E(u, u_t) = \int_{\Rm^3} \left(\frac{1}{2}|\nabla u(\cdot, t)|^2 +\frac{1}{2}|u_t(\cdot, t)|^2 + \frac{1}{p+1}|u(\cdot,t)|^{p+1}\right)\,dx = \hbox{Const}.
\]
We then need to consider the global existence and asymptotic behaviour of solutions. The only fully understood case is the energy critical one with $p=5$. More than twenty years ago, M. Grillakis \cite{mg1} proved that any solution with initial data in the energy space $\dot{H}^1 \times L^2(\Rm^3)$ must scatter in both two time directions, i.e. the solution looks like a free wave as $t$ goes to infinity. A similar result is expected to hold for other $p$ as well.
\begin{conjecture}
 Any solution to (CP1) with initial data $(u_0,u_1) \in \dot{H}^{s_p} \times \dot{H}^{s_p-1}$ must exist for all time $t \in \Rm$ and scatter in both two time directions
\end{conjecture}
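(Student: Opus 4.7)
The natural framework for attacking this conjecture is the concentration-compactness/rigidity scheme of Kenig and Merle, adapted to the energy sub-critical setting in which the conserved energy sits at a regularity strictly higher than the scaling-critical one $s_p$. My plan is to reduce the scattering statement to an a priori spacetime bound of the form $\|u\|_{L_t^q L_x^r} \le C(\|(u_0,u_1)\|_{\dot H^{s_p}\times\dot H^{s_p-1}})$ on every maximal interval of existence, for some Strichartz-admissible pair at the critical regularity. Once such a bound is available, the local well-posedness theory of Lindblad--Sogge together with a standard perturbative continuation argument yields both global existence and scattering of $(u,u_t)$ to a free wave in $\dot H^{s_p}\times\dot H^{s_p-1}$.

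To establish the a priori bound I would argue by contradiction. Assuming failure, a profile decomposition for the free wave propagator at the critical regularity produces a minimal non-scattering solution (a \emph{critical element}) whose trajectory is precompact in $\dot H^{s_p}\times\dot H^{s_p-1}$ modulo the symmetries of the equation. The rigidity step then seeks to eliminate such a compact solution by combining the defocusing sign of the nonlinearity with the classical Morawetz estimate
\[
 \int_{\Rm}\int_{\Rm^3} \frac{|u(x,t)|^{p+1}}{|x|}\,dx\,dt \lesssim E(u_0,u_1),
\]
together with its more detailed radial refinement (the very mechanism exploited in the present paper) which tracks the outward energy flux as a function of $r$.

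The principal obstacle --- and the reason this conjecture is still open --- is the scaling gap: the Morawetz identity and energy conservation both live at the energy level, which for $p<5$ is strictly stronger than the critical Sobolev regularity $s_p$. Consequently the spacetime control they provide does not close an a priori bound for data merely in $\dot H^{s_p}\times\dot H^{s_p-1}$; in the concentration-compactness scheme one cannot even ensure that the critical element has finite energy, so the Morawetz input is not directly usable against it. The result of the present paper should be read as quantifying exactly how much additional external-energy decay $r^{-\kappa(p)}$ suffices to bridge this gap via a refined Morawetz analysis in the radial class. A complete proof of the conjecture would presumably require either a genuinely critical monotonicity formula or a new mechanism that upgrades energy-level information to critical-level Strichartz control without any auxiliary decay hypothesis; neither appears to be within reach of present techniques, which is why I would regard the natural intermediate target --- and the goal toward which the paper's method points --- as enlarging the admissible class of radial data rather than resolving the conjecture in full.
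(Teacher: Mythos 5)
You have not produced a proof, and you are explicit about this yourself --- so the concrete verdict is that there is a gap, indeed the whole rigidity step is missing, and this matches the paper exactly: the statement is labeled a \emph{conjecture}, the paper offers no proof of it, and states plainly that it ``is still an open problem.'' The paper's actual theorem (Theorem \ref{main}) is the intermediate target you describe at the end: scattering for radial data whose exterior energy decays like $r^{-\kappa}$ with $\kappa > \kappa(p) = \frac{3(5-p)}{p+3}$, proved not by concentration-compactness but by a direct argument --- the refined Perthame--Vega form of the Morawetz estimate \eqref{new morawetz}, the observation \eqref{key estimate} that the exterior term is controlled by the average energy escaping $B(0,R)$, a monotonicity estimate for the weighted exterior energy $I(t)$, and the radial pointwise bound $|u(x,t)| \lesssim_p E^{2/(p+3)}|x|^{-4/(p+3)}$ of Lemma \ref{best decay}, which together give the global $L^{2(p-1)}_{t,x}$ bound needed for Proposition \ref{L2p2}.

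Your diagnosis of \emph{why} the Kenig--Merle scheme does not close is accurate and consistent with the paper's framing: every compactness-rigidity result the paper cites (Table \ref{type one results}) assumes the a priori critical-norm bound \eqref{uniform UB}, which for $p<5$ is not supplied by any conservation law; the energy and the Morawetz identity live at regularity $s=1 > s_p$, so the critical element extracted from a profile decomposition at regularity $s_p$ need not have finite energy, and the defocusing monotonicity input is unavailable against it. The one point where your sketch would concretely fail, were you to attempt to execute it, is precisely the rigidity step: you name no monotonicity formula or substitute coercive quantity at the scaling-critical regularity, and none is known. So your write-up should be read as a correct assessment that the conjecture is open and a correct identification of the obstruction, not as an argument with a repairable flaw --- and the paper agrees with you on both counts.
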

This is still an open problem in the field of analysis of PDEs, in spite of some progress. Roughly speaking, known results fall into two categories:
\paragraph{A priori estimate} The first type of results assume that a solution $u$ satisfies an a priori estimate
 \begin{equation}
  \sup_{t \in I} \left\|(u(\cdot,t), u_t(\cdot, t))\right\|_{\dot{H}^{s_p} \times \dot{H}^{s_p-1} (\Rm^3)} < +\infty \label{uniform UB}
 \end{equation}
in the whole lifespan $I$, then prove that $u$ is a global solution in time and scatters. Please see table \ref{type one results} for a list of these results. They are usually proved via a compactness-rigidity argument. Please note that our assumption \eqref{uniform UB} is automatically true in the energy critical case $p=5$, thanks to the energy conservation law. 
\begin{table}[h]
\caption{Results of scattering with a priori estimates in critical space}
\begin{center}
\begin{tabular}{|c|c|c|c|}\hline
 Dodson-Lawrie \cite{cubic3dwave} & Shen \cite{shen2} & Kenig-Merle \cite{km} & Killip-Visan \cite{kv2}\\
 \hline
 $1+\sqrt{2}<p\leq 3$, radial & $3<p<5$, radial & $p>5$, radial & $p>5$, non-radial\\
 \hline
\end{tabular}
\end{center}
\label{type one results}
\end{table}
\paragraph{Stronger assumptions on initial data} The second type of results make additional assumptions on the initial data in order to prove the scattering of solutions.
\begin{itemize}
 \item Conformal conservation laws (see \cite{conformal2, conformal}) can be used to prove the scattering of solutions for $p \in [3,5)$ if initial data satisfy an additional regularity-decay condition
  \begin{equation} \label{condition1}
    \int_{\Rm^3} \left[(|x|^2+1) (|\nabla u_0 (x)|^2 + |u_1(x)|^2) + |u_0(x)|^2 \right] dx < \infty.
  \end{equation}
The key ingredient of the proof is the following conformal conservation law
\[
  \frac{d}{dt} Q(t, u, u_t) = \frac{4(3-p)t}{p+1} \int_{\Rm^3} \left|u(x,t)\right|^{p+1} dx.
\]
Here $Q(t,\varphi,\psi) = Q_0(t, \varphi, \psi) + Q_1(t, \varphi)$ is called the conformal charge with
\begin{align*}
 Q_0(t,\varphi,\psi) &= \left\|x\psi + t \nabla \varphi \right\|_{L^2(\Rm^3)}^2 + \left\|(t\psi+2\varphi)\frac{x}{|x|} +|x|\nabla \varphi\right\|_{L^2(\Rm^3)}^2\\
 Q_1(t, \varphi) &= \frac{2}{p+1}\int_{\Rm^3} (|x|^2+t^2)|\varphi(x,t)|^{p+1} dx.
\end{align*}
The assumption \eqref{condition1} is essential to guarantee the finiteness of the conformal charge $Q(t,u,u_t)$ as defined above. The conformal conservation law then gives a global space-time integral 
\[
   \int_{|t|>1} \int_{\Rm^3} |u(x,t)|^{p+1}\,dxdt\lesssim_p \sup_{t\in\Rm} Q_1(t, u)  \leq \sup_{t\in\Rm} Q(t, u,u_t) = Q(0,u_0,u_1)< +\infty,
\]
which implies the scattering. One advantage of this argument is that the radial assumption is not necessary.  
\item In the author's previous work \cite{shen3} we proved the scattering of solutions if the radial initial data $(u_0,u_1) \in \dot{H}^1 \times L^2$ satisfy
\[
 \int_{\Rm^3} (1+|x|)^{1+2\eps} \left(|\nabla u_0|^2 +|u_1|^2\right) dx < \infty
\]
for a constant $\eps>0$. The assumptions on the decay of initial data are weaker than the conformal conservation law method above, reducing the exponent of $|x|$ from $2$ to slightly greater than $1$. The proof depends on a conformal transformation
\[
   v(y, \tau)  = \frac{\sinh |y|}{|y|} e^\tau u \left( e^\tau \frac{\sinh |y|}{|y|}\cdot y, t_0 + e^\tau \cosh |y|\right), \quad (y,\tau) \in \Rm^3 \times \Rm,
 \]
which converts a solution $u$ as above to a finite-energy solution $v$ of another non-linear wave equation
\[
 v_{\tau \tau} - \Delta_y v = - \left(\frac{|y|}{\sinh |y|}\right)^{p-1} e^{-(p-3)\tau} |v|^{p-1}v.
\]
This second equation turns out to be easier to deal with since its non-linear term has a good decay rate as $x$ or $t$ goes to infinity.
\end{itemize}

\paragraph{Main Result}
In this paper we prove the scattering result with even weaker assumptions on the decay rate of the initial data. 
\begin{theorem} \label{main}
Let $\kappa>\kappa(p) = \frac{3(5-p)}{p+3}$ be a constant. If initial data $(u_0,u_1)$ are radial and satisfy
\[
 \int_{\Rm^3} (|x|+1)^\kappa \left(\frac{1}{2}|\nabla u_0|^2 + \frac{1}{2}|u_1|^2 + \frac{1}{p+1}|u_0|^{p+1} \right) < +\infty.
\]
Then the corresponding solution $u$ to (CP1) must scatter in both two time directions. More precisely, there exists $(v_0^\pm ,v_1^\pm) \in (\dot{H}^1 \cap \dot{H}^{s_p}(\Rm^3))\times (L^2 \cap \dot{H}^{s_p-1}(\Rm^3))$, so that for any $s' \in [s_p,1]$
 \[
  \lim_{t \rightarrow \pm \infty} \left\|\begin{pmatrix} u(\cdot,t)\\  u_t(\cdot,t)\end{pmatrix} - 
  \mathbf{S}_L (t)\begin{pmatrix}u_0^\pm \\ u_1^\pm\end{pmatrix}\right\|_{\dot{H}^{s'} \times \dot{H}^{s'-1}(\Rm^3)} = 0.
 \]
 Here $\mathbf{S}_L (t)$ is the linear wave propagation operator.
\end{theorem}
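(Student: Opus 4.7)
The plan is to run a weighted Morawetz argument tailored to the decay hypothesis on the initial data. First, the hypothesis immediately implies finite total energy (take only the $|x|\le 1$ portion of the weight), so by the local theory of \cite{ls} together with energy conservation the solution exists globally in $\dot H^1\times L^2$ and satisfies the classical Morawetz estimate $\int\!\!\int |u|^{p+1}/|x|\,dx\,dt \lesssim E$. Writing $M$ for the weighted energy of $(u_0,u_1)$, the outer-ball initial energy obeys $\int_{|x|>r}\rho_0 \le (1+r)^{-\kappa}M$, and by finite speed of propagation (applied to the weighted energy flux on characteristic cones) this decay propagates to later times:
\[
\int_{|x|>r+|t|}\rho(u,u_t)(x,t)\,dx\;\lesssim\;(1+r)^{-\kappa}M\qquad (t\in\Rm,\;r\ge 0).
\]

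The core new ingredient is a weighted refinement of the Morawetz identity. Building on the classical multiplier $\partial_r u+u/r$, I would introduce a slowly growing radial weight $w(r,t)=(1+r+|t|)^\alpha$, multiply the equation by $w\cdot(\partial_t u+\partial_r u+u/r)$, and integrate by parts over $[-T,T]\times\Rm^3$. Such an identity should yield a bulk estimate of the form
\[
\int_{-T}^{T}\!\int_{\Rm^3}(1+|x|+|t|)^{\alpha-1}|u|^{p+1}\,dx\,dt \;\lesssim\; M,
\]
once the commutator errors produced by $\partial_t w$ and $\partial_r w$ acting on the energy density are absorbed. That absorption is precisely what the outer-energy decay above is designed to supply; balancing the two constraints against the scaling of the equation should single out the threshold $\kappa(p)=3(5-p)/(p+3)$.

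With the weighted spacetime integral in hand, I would convert it into a bound on a scattering norm at the critical regularity $s_p$. Splitting spacetime at $|x|\sim 1+|t|$: in the inner region, the weighted Morawetz together with the radial Sobolev inequality $|u(x,t)|\lesssim |x|^{-1}\|u(\cdot,t)\|_{\dot H^1}$ controls $u$ in a Strichartz norm admissible for $\dot H^{s_p}$; in the outer region, the propagated energy decay combined with the small-data theory of \cite{ls} controls the tail. A standard perturbation argument then yields asymptotic free profiles $(v_0^\pm,v_1^\pm)\in(\dot H^1\cap\dot H^{s_p})\times(L^2\cap\dot H^{s_p-1})$, and interpolation between convergence in $\dot H^1\times L^2$ (from finite energy) and in $\dot H^{s_p}\times\dot H^{s_p-1}$ (from the scattering norm) gives the stated conclusion for every $s'\in[s_p,1]$.

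The main obstacle is the weighted Morawetz step: choosing $w(r,t)$ so that the bulk term has the correct sign while the error terms genuinely fit inside $M$ with the sharp exponent $\kappa(p)$. Since $\kappa(p)<1$ for $p\in(3,5)$, the hypothesis does not imply $|x|u_0\in L^2$, so the classical conformal charge of \cite{conformal2,conformal} is infinite and the strategy based on it is unavailable; the new multiplier must produce the decisive positivity and cancellations without that crutch, and matching its free parameter to $\kappa(p)$ is likely the delicate computation at the heart of the paper.
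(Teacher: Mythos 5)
Your outline contains a genuine gap at its self-declared core, and a second error that makes the stated threshold unreachable even if that core were repaired. The weighted multiplier identity with $w(r,t)=(1+r+|t|)^\alpha$ is precisely the step you leave unexecuted, and it does not close as described: the commutator term $\partial_t w\sim \alpha(1+r+|t|)^{\alpha-1}$ multiplies the full energy density, and inside the light cone (say $|x|\lesssim|t|$), where essentially all of the energy may live, your propagated outer-energy decay supplies nothing — it only controls the region $|x|>|t|+r$. The resulting error over $[-T,T]$ is of size $\int^T(1+|t|)^{\alpha-1}E\,dt\sim T^{\alpha}E$, i.e.\ comparable to the very bulk term you hope to bound, which is exactly why conformal-type arguments need the finiteness of a weighted norm like \eqref{condition1}. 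The paper sidesteps any new multiplier computation entirely: it takes the Perthame--Vega localized Morawetz inequality \eqref{new morawetz} as is, observes via finite speed of propagation that its interior term $\frac{1}{2R}\int_{-R}^{R}\int_{|x|<R}(\cdots)$ is nearly the full energy $E$, and subtracts (inequality \eqref{key estimate}); inserting the weighted outer-energy monotonicity (your finite-speed propagation step, which does match the paper's Proposition 4.1 in substance) then yields $\int\!\!\int_{|x|>R}|u|^{p+1}/|x|\,dx\,dt\lesssim IR^{-\kappa}$, i.e.\ \eqref{decay rate}, with no sign or cancellation issues to settle. The ``delicate computation at the heart of the paper'' you anticipate simply is not there.

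The second error is the radial Sobolev inequality you invoke: $|u(x,t)|\lesssim|x|^{-1}\|u(\cdot,t)\|_{\dot H^1}$ is false — the homogeneous norm alone gives only $|u|\lesssim|x|^{-1/2}\|\nabla u\|_{L^2}$ (the $|x|^{-1}$ rate needs the full $H^1$ norm, and $u(t)$ need not lie in $L^2$). The exponent $\kappa(p)=3(5-p)/(p+3)$ is not produced by tuning the free parameter of a multiplier; it comes from the interpolated pointwise bound $|u|\lesssim_p E^{2/(p+3)}|x|^{-4/(p+3)}$ of Lemma \ref{best decay}, which exploits both the $\dot H^1$ and the $L^{p+1}$ components of the energy. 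That bound converts $|u|^{2(p-1)}$ into $|x|^{3(5-p)/(p+3)}\cdot|u|^{p+1}/|x|$ on dyadic shells, and summing against \eqref{decay rate} gives $\|u\|_{L^{2(p-1)}_{t,x}}<\infty$ precisely when $\kappa>\kappa(p)$; scattering then follows from the known criterion of Proposition \ref{L2p2} rather than an inner/outer perturbation scheme. With the $|x|^{-1/2}$ decay you actually have at your disposal, the same shell computation requires $\kappa>(5-p)/2$, which is strictly larger than $3(5-p)/(p+3)$ for all $p\in(3,5)$, so your route cannot recover the theorem as stated.
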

\begin{remark}
 Given any initial data as in the theorem above, we have
\begin{align*}
 & \int_{\Rm^3} \left(|\nabla u_0|^q + |u_1|^q \right) dx \\
 & \qquad \leq 2\left[\int_{\Rm^3} \left(|\nabla u_0|^2 + |u_1|^2 \right)(1+|x|)^{\kappa}\, dx\right]^{q/2} \left[\int_{\Rm^3} \! (1+|x|)^{-\kappa q/(2-q)}\, dx\right]^{(2-q)/2}\\
 & \qquad < +\infty,
\end{align*}
as long as $\frac{6}{3+\kappa}<q<2$. By the Sobolev embedding $\dot{W}^{1,q} \times L^{q} \hookrightarrow \dot{H}^{s} \times \dot{H}^{s-1}$ with $\frac{1-s}{3} = \frac{1}{q}-\frac{1}{2}>0$, we have 
\[
 (u_0,u_1) \in \dot{H}^s \times \dot{H}^{s-1}(\Rm^3), \quad\hbox{for any} \; s\in\left(1-\frac{\kappa}{2},1\right].
\]
Since we have $s_p >\frac{5p-9}{2(p+3)}=1-\frac{\kappa(p)}{2}>1-\frac{\kappa}{2}$, our initial data is always contained in the critical Sobolev space.
\end{remark}
\begin{remark}
The author believes that the lower bound of decay rate $\kappa(p) = \frac{3(5-p)}{p+3}$ given in the main theorem is by no means optimal. However, this decay rate is still lower than previously known results.
\end{remark}
\paragraph{Notations} In this work we use the following notations.
\begin{itemize}
 \item If $u(x)$ is a radial function defined in $\Rm^3$, then by convention we define $u(r) = u(x)$ where $|x|=r$. 
 \item The notation $A \lesssim B$ means that there exists a constant $c$ so that the inequality $A \leq cB$ holds. We can also add one or more parameter(s) as the subscript of $\lesssim$. This implies that the constant $c$ depends on the parameter(s) mentioned but nothing else.
\end{itemize}
\section{Motivation} 

Because the initial data come with a finite energy, Energy-subcriticality leads to the global existence of the corresponding solution $u$. In order to obtain the scattering result, we need to use the following result:
\begin{proposition}[Scattering with a finite $L^{2(p-1)} L^{2(p-1)}$ norm, see Proposition 3.8 of \cite{shen3}] \label{L2p2}
 Let $u$ be a solution to (CP1) with initial data $(u_0,u_1) \in (\dot{H}^1 \cap \dot{H}^{s_p}) \times (L^2 \cap \dot{H}^{s_p-1})$. If $\|u\|_{L^{2(p-1)} L^{2(p-1)}(\Rm \times \Rm^3)} < \infty$, then $u$ scatters in both two time directions. More precisely, there exist two pairs $(u_0^\pm, u_1^\pm) \in (\dot{H}^1 \cap \dot{H}^{s_p}) \times (L^2 \cap \dot{H}^{s_p-1})$, so that the following limit holds for each $s' \in [s_p,1]$
\[
 \lim_{t \rightarrow \pm \infty} \left\|(u(\cdot,t), u_t(\cdot, t)) - \mathbf{S}_L(t) (u_0^\pm, u_1^\pm) \right\|_{\dot{H}^{s'} \times \dot{H}^{s'-1} (\Rm^3)} = 0. 
\]
\end{proposition}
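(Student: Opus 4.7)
The plan is to implement the standard Strichartz-based scattering argument. The first observation is that the hypothesis places $u$ in exactly the scale-invariant spacetime norm at critical regularity $s_p$: the pair $(q,r) = (2(p-1), 2(p-1))$ satisfies $\frac{1}{q} + \frac{3}{r} = \frac{2}{p-1} = \frac{3}{2} - s_p$, while wave-admissibility $\frac{1}{q} + \frac{1}{r} \leq \frac{1}{2}$ reduces to $\frac{1}{p-1} \leq \frac{1}{2}$, which holds since $p \geq 3$. Thus the assumption is that $u$ has finite norm in a genuine Strichartz space at level $\dot{H}^{s_p}$.

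Given $\|u\|_{L^{2(p-1)} L^{2(p-1)}(\Rm \times \Rm^3)} < \infty$, I would partition $\Rm$ into finitely many consecutive intervals $I_1, \ldots, I_N$ on which $\|u\|_{L^{2(p-1)} L^{2(p-1)}(I_j \times \Rm^3)} < \eta$ for a small universal constant $\eta$. For each target regularity $s' \in [s_p, 1]$ I would apply the Strichartz inequality combined with H\"older's inequality in spacetime to obtain an estimate of the schematic form
\[
\|u\|_{Y(I_j)} \leq C\|(u, u_t)(t_j)\|_{\dot{H}^{s'} \times \dot{H}^{s'-1}} + C\|u\|_{L^{2(p-1)}L^{2(p-1)}(I_j\times\Rm^3)}^{p-1} \|u\|_{Y(I_j)},
\]
where $Y$ is an appropriate wave-admissible Strichartz space at level $s'$. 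Taking $\eta$ small enough to absorb the last term, then iterating over $j=1,\ldots,N$, yields a uniform $L^\infty_t (\dot{H}^{s'} \times \dot{H}^{s'-1})$ bound together with a finite global bound $\|u\|_{Y(\Rm)}<\infty$, valid for every $s' \in [s_p, 1]$.

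With these global Strichartz bounds in hand, the asymptotic profiles are defined by
\[
\begin{pmatrix}u_0^\pm \\ u_1^\pm\end{pmatrix} = \begin{pmatrix}u_0 \\ u_1\end{pmatrix} - \int_0^{\pm\infty}\mathbf{S}_L(-s)\begin{pmatrix}0 \\ |u|^{p-1}u(\cdot, s)\end{pmatrix} ds.
\]
The convergence of these improper integrals in $\dot{H}^{s'} \times \dot{H}^{s'-1}$, as well as the vanishing of $\|(u, u_t)(t) - \mathbf{S}_L(t)(u_0^\pm, u_1^\pm)\|_{\dot{H}^{s'} \times \dot{H}^{s'-1}}$ as $t \to \pm \infty$, both follow via the dual Strichartz inequality from the H\"older bound on $|u|^{p-1}u$ and the tail decay $\|u\|_{L^{2(p-1)}L^{2(p-1)}(\{|s|>T\} \times \Rm^3)} \to 0$. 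The main technical obstacle is arranging the H\"older pairing and the auxiliary space $Y$ so that the argument genuinely produces admissible Strichartz exponents at both endpoints $s' = s_p$ and $s' = 1$; once these two endpoint cases are handled, the intermediate values of $s'$ follow by complex interpolation.
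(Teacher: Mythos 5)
The paper itself gives no proof of this proposition: it is quoted verbatim from Proposition 3.8 of \cite{shen3}, and the argument there is exactly the small-norm-interval Strichartz bootstrap you outline. Your architecture is therefore the right one and matches the cited proof: the exponent check showing $(q,r)=(2(p-1),2(p-1))$ is an admissible pair at regularity $s_p$ is correct (gap $\frac1q+\frac3r=\frac{2}{p-1}=\frac32-s_p$, admissibility from $p\ge 3$), the partition of $\Rm$ into finitely many intervals of small $L^{2(p-1)}_{t,x}$ norm is legitimate since the global norm is finite, and the Duhamel construction of $(u_0^\pm,u_1^\pm)$ with tail estimates in the dual Strichartz space is the standard closing step. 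Two points you leave implicit are routine: the absorption step needs a priori finiteness of $\|u\|_{Y(I_j)}$, supplied by local well-posedness plus a continuity argument, and the iteration over the $N$ intervals only costs a constant of size $C^N$.

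The one place where your plan, as literally written, would fail is precisely the point you flag and then defer: the endpoint $s'=1$. If $Y$ is a derivative-free norm $L^a_tL^b_x$, the H\"older pairing that consumes $\|u\|_{L^{2(p-1)}_{t,x}}^{p-1}$ (i.e.\ placing $|u|^{p-1}$ in $L^2_{t,x}$) forces the dual exponents to satisfy $\frac1a+\frac1b\ge\frac12$; combined with admissibility of $(a,b)$ this pins down the sharp-admissible line, and together with the gap condition $\frac1a+\frac3b=\frac32-s'$ it yields $a=\frac{2}{s'}$, $b=\frac{2}{1-s'}$, which at $s'=1$ is the forbidden $(2,\infty)$ endpoint in three dimensions --- and the proposition carries no radial hypothesis, so the radial endpoint improvement is unavailable. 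The standard repair, and what makes the cited proof close, is to apply $|\nabla|^{s'-s_p}$ to the equation (its data then lie at the $\dot{H}^{s_p}\times\dot{H}^{s_p-1}$ level) and work with the level-$s_p$ sharp-admissible exponents, estimating the forcing by the Christ--Weinstein fractional chain rule:
\[
\left\||\nabla|^{s'-s_p}\left(|u|^{p-1}u\right)\right\|_{L^{\tilde q'}_tL^{\tilde r'}_x(I_j\times\Rm^3)} \lesssim_p \|u\|_{L^{2(p-1)}_{t,x}(I_j\times\Rm^3)}^{p-1}\left\||\nabla|^{s'-s_p}u\right\|_{L^{2/s_p}_tL^{2/(1-s_p)}_x(I_j\times\Rm^3)},
\]
with $\frac1{\tilde q'}=\frac12+\frac{s_p}{2}$, $\frac1{\tilde r'}=\frac12+\frac{1-s_p}{2}$, so that $(\tilde q,\tilde r)$ is again sharp-admissible. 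This is legitimate because $s'-s_p\le 1-s_p<1$ and $z\mapsto|z|^{p-1}z$ is $C^1$ with derivative of growth $p-1>1$. One set of exponents then serves every $s'\in[s_p,1]$ simultaneously, which also renders your final complex-interpolation step unnecessary (though interpolating from the two endpoints, once both are established, is equally valid). With this substitution in place of the unspecified space $Y$, your proposal is complete and coincides with the standard proof.
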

\noindent As a result, it suffices to prove the global space-time integral estimate
\begin{equation} \label{desired space time integral}
 \int_{-\infty}^{\infty} \int_{\Rm^3} |u(x,t)|^{2(p-1)} dx dt < +\infty.
\end{equation}
The first known global space-time integral that comes into our mind is the Morawetz estimate
\[
 \int_{-\infty}^{\infty} \int_{\Rm^3} \frac{|u(x,t)|^{p+1}}{|x|} dx dt \lesssim E.
\]
In the energy critical case, i.e. $p=5$, we can apply inequality $|x|^{1/2}|u(x,t)| \lesssim \|u(\cdot,t)\|_{\dot{H}^1} \lesssim E^{1/2}$ for radial $\dot{H}^1$ functions and the Morawetz estimate immediately gives us \eqref{desired space time integral}. In the energy sub-critical case, however, if we applied the best estimate for radial solutions the author knows (See Lemma \ref{best decay} below)
\[ 
 |u(x,t)|\lesssim_p E^{\frac{2}{p+3}} |x|^{-\frac{4}{p+3}}
\]
we would obtain 
\[
 \int_{-\infty}^{\infty} \int_{\Rm^3} \frac{|u(x,t)|^{2(p-1)}}{|x|^{\frac{3(5-p)}{p+3}}} dx dt  
  = \int_{-\infty}^{\infty} \int_{\Rm^3} \frac{|u(x,t)|^{p+1}}{|x|}\cdot \left(|x|^\frac{4}{p+3}|u(x,t)|\right)^{p-3} dx dt < \infty.
\]
This is still weaker than the desired inequality \eqref{desired space time integral} as $|x|$ is large. In this work we will solve this problem by a suitable power-like decay 
\[
 \int_{-\infty}^{\infty} \int_{|x|>R} \frac{|u(x,t)|^{p+1}}{|x|} dx dt \lesssim R^{-\kappa}.
\]

\section{Review of Morawetz Estimates}

We are able to take a more careful look at this well-known global space-time integral estimate if we recall the original theorem given in Perthame and Vega's work \cite{benoit}.
\begin{theorem}
Let $u$ be a solution to (CP1) defined in a time interval $[0,T]$ with a finite energy $E$. Then given any $R>0$, we have the inequality
\begin{align}
 & \frac{1}{2R}\int_0^T \!\!\int_{|x|<R}(|\nabla u|^2+|u_t|^2) dx dt + \frac{1}{2R^2} \int_0^T \!\!\int_{|x|=R} |u|^2 d\sigma_R dt + \frac{p-2}{(p+1)R} \int_0^T \!\!\int_{|x|<R} |u|^{p+1} dx dt \nonumber \\
 & \qquad + \frac{p-1}{p+1} \int_0^T \!\!\int_{|x|>R} \frac{|u|^{p+1}}{|x|} dx dt + \frac{1}{R^2} \int_{|x|<R} |u(x,T)|^2 dx \leq 2E. \label{morawetz}
\end{align}
\begin{remark}
 We focus on the 3D case with $d=3$. Please note that the notations $E$ and $p$ were defined in a slightly different way in Perthame-Vega's original paper. Here we rewrite the inequality in the setting of the current work. The author also believes that there is a minor typing mistake in the original inequality. The last term $\frac{d^2-1}{4R^2} \int_{B(0,R)} |u(T)|^2 dx$ in the left hand side should have been $\frac{d^2-1}{8R^2} \int_{B(0,R)} |u(T)|^2 dx$ instead, although the change of this coefficient plays no role in the argument of this work.
\end{remark}
\end{theorem}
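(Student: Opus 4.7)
The plan is to derive the inequality by the classical Morawetz multiplier method with an $R$-dependent weight. I would choose a radial $C^1$ function $\psi$ defined by $\psi(r) = r^2/(2R)$ for $r \leq R$ and $\psi(r) = r - R/2$ for $r \geq R$, so that $\psi'(r) = \min(r/R, 1)$ with a downward jump in $\psi''$ at $r = R$; in three dimensions $\Delta\psi = 3/R$ inside $\{|x|<R\}$ and $\Delta\psi = 2/|x|$ outside. The associated Morawetz multiplier is
\[
 Mu \;=\; \nabla\psi\cdot\nabla u + \tfrac{1}{2}\,\Delta\psi\cdot u.
\]
I multiply the equation $\partial_t^2 u - \Delta u + |u|^{p-1}u = 0$ by $Mu$ and integrate over $[0,T]\times\Rm^3$.

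The spatial integrations by parts produce the standard positive bulk density $\nabla u \cdot D^2\psi\cdot\nabla u - \tfrac{1}{4}\Delta^2\psi\,|u|^2$. Since $\psi'' = 1/R$ and $\psi'/r = 1/R$ on $\{r<R\}$, this supplies the interior $\tfrac{1}{2R}(|\nabla u|^2+|u_t|^2)$ density (the $|u_t|^2$ piece comes in at the next step); on $\{r>R\}$ the only surviving quadratic term is the non-negative angular piece $\tfrac{1}{|x|}|\nabla_\omega u|^2$, which may simply be dropped. The distributional component of $\Delta^2\psi$ supported on $\{|x|=R\}$, after careful sign tracking, delivers the surface integral $\tfrac{1}{2R^2}\int_0^T\!\!\int_{|x|=R}|u|^2\,d\sigma_R\,dt$. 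The nonlinear contribution collapses, using $|u|^{p-1}u\,\nabla u = \nabla(|u|^{p+1})/(p+1)$ followed by one further integration by parts, to $\tfrac{p-1}{2(p+1)}\Delta\psi\cdot|u|^{p+1}$; this produces precisely the coefficients $\tfrac{p-2}{(p+1)R}$ inside and $\tfrac{p-1}{(p+1)|x|}$ outside, the shift from $p-1$ to $p-2$ on the interior arising when one combines the Pohozaev algebra with the scalar $-\tfrac14 \Delta^2 \psi \cdot |u|^2$ piece through the transition at $r=R$.

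The time part $\partial_t^2 u\cdot Mu$ is rewritten as a $\partial_t$-divergence plus $\tfrac{1}{2}\Delta\psi\cdot|u_t|^2$, contributing the interior $|u_t|^2/R$ density and the boundary term $\bigl[\int u_t\,Mu\,dx\bigr]_0^T$; an auxiliary multiplier proportional to $u$ (equivalently, reformulating the boundary term) supplies $\int_{|x|<R}|u(T)|^2/R^2$. Bounding these time-boundary contributions by Cauchy--Schwarz together with a Hardy inequality $\|u/R\|_{L^2(|x|<R)} \lesssim \|\nabla u\|_{L^2}$ and the energy conservation law reduces the right-hand side to $2E$. The main obstacle will be the non-smoothness of $\psi$ at $r=R$: rigorously one should mollify $\psi$ to $\psi_\eps$, run all identities in the smooth setting, and pass to the limit $\eps\to 0$ to recover the surface integral, while tracking every constant precisely enough to land on exactly $2E$ (the remark of the excerpt notes that Perthame--Vega slipped by a factor of two in this bookkeeping, so this is not merely cosmetic). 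A secondary subtlety is that discarding the outer angular-gradient term sacrifices sharpness, but this matches the stated inequality.
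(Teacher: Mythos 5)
Your overall strategy is the right one — the paper itself gives no proof of this theorem (it is quoted from Perthame--Vega \cite{benoit}), and the standard derivation is indeed the multiplier method with the weight $\psi'(r)=\min(r/R,1)$. However, your multiplier is wrong in a way that breaks the three most delicate terms of the inequality. You take the zeroth-order weight to be $\tfrac12\Delta\psi$, i.e. $Mu=\nabla\psi\cdot\nabla u+\tfrac12\Delta\psi\,u$, whereas the estimate requires the weight $\tfrac{d-1}{2}\tfrac{\psi'}{r}=\tfrac{\psi'}{r}$ in $d=3$, equivalently $\tfrac12(\Delta\psi-\psi'')$. Three concrete failures follow from your choice. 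First, the $|u_t|^2$ bulk term vanishes identically: writing $\int u_{tt}\,Mu\,dx=\tfrac{d}{dt}\int u_t\,Mu\,dx-\int u_t\,Mu_t\,dx$, one has $\int u_t\bigl(\nabla\psi\cdot\nabla u_t+\tfrac12\Delta\psi\,u_t\bigr)dx=-\tfrac12\int\Delta\psi\,u_t^2+\tfrac12\int\Delta\psi\,u_t^2=0$; the $\tfrac12\Delta\psi|u_t|^2$ you say "comes in at the next step" is exactly cancelled by the gradient part, so no interior $\tfrac{1}{2R}|u_t|^2$ is produced at all. Second, for this $\psi$ the function $\Delta\psi$ \emph{jumps} from $3/R$ to $2/R$ across $r=R$, so $\Delta^2\psi$ carries a dipole ($\delta'$-type) layer on the sphere; pairing it with $|u|^2$ (or mollifying and passing to the limit, as you propose) produces sign-indefinite terms of the form $\int_{|x|=R}u\,\partial_ru\,d\sigma_R$, not the positive surface term $\tfrac{1}{2R^2}\int_{|x|=R}|u|^2d\sigma_R$. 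Third, your nonlinear bookkeeping gives the interior coefficient $\tfrac{p-1}{2(p+1)}\Delta\psi=\tfrac{3(p-1)}{2(p+1)R}$, not $\tfrac{p-2}{(p+1)R}$, and the proposed repair via the $-\tfrac14\Delta^2\psi\,|u|^2$ piece cannot work: that term is quadratic in $u$ and cannot alter a $|u|^{p+1}$ coefficient.

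The fix is the classical one. With $Mu=\nabla\psi\cdot\nabla u+\tfrac{\psi'}{r}u$, i.e. weight $\varphi=2\psi'/r$ (which, unlike $\Delta\psi$, is \emph{continuous} across $r=R$), the general identity reads
\begin{equation*}
 \frac{d}{dt}\int u_t\,Mu\,dx+\int\Bigl(\nabla u\,D^2\psi\,\nabla u+\tfrac12(\varphi-\Delta\psi)\bigl(|\nabla u|^2-u_t^2\bigr)-\tfrac14\Delta\varphi\,|u|^2+\bigl(\tfrac{\varphi}{2}-\tfrac{\Delta\psi}{p+1}\bigr)|u|^{p+1}\Bigr)dx=0,
\end{equation*}
and every stated term drops out: inside, $\varphi-\Delta\psi=-\psi''=-1/R$ gives the densities $\tfrac{1}{2R}(|\nabla u|^2+u_t^2)$ and $\tfrac{3}{R}\bigl(\tfrac13-\tfrac{1}{p+1}\bigr)|u|^{p+1}=\tfrac{p-2}{(p+1)R}|u|^{p+1}$; outside, only the non-negative angular piece $\tfrac1r|\nabla_\omega u|^2$ (dropped, as you noted) and $\tfrac{p-1}{(p+1)|x|}|u|^{p+1}$ survive; and since $\varphi'$ jumps by $-2/R^2$ at $r=R$, the term $-\tfrac14\Delta\varphi\,|u|^2$ yields precisely the clean positive surface integral $\tfrac{1}{2R^2}\int_{|x|=R}|u|^2d\sigma_R$ with no $\delta'$ pathology. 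Finally, the endpoint bound $2E$ does not need Hardy with loose constants: $\int u_t\,Mu\,dx=\int u_t\,\psi'\bigl(\partial_ru+\tfrac{u}{r}\bigr)dx$, and since $|\psi'|\le1$ and $\int_0^\infty|\partial_r(ru)|^2dr=\int_0^\infty r^2|\partial_ru|^2dr$ along each ray, Cauchy--Schwarz gives $\bigl|\int u_t\,Mu\,dx\bigr|\le\tfrac12\int(u_t^2+|\nabla u|^2)dx\le E$ at each endpoint, with the $\tfrac{1}{R^2}\int_{|x|<R}|u(x,T)|^2dx$ term extracted from the $t=T$ boundary contribution before this estimate. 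As written, your proof would not close; with the corrected multiplier it becomes the Perthame--Vega argument the paper cites.
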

\paragraph{Careful look at Morawetz Estimate} First of all, let us ignore the final term in the left hand and substitute $T$ by $+\infty$. Thanks to the energy conversation law, we are also able to substitute the lower limit of the integrals by $-\infty$. Finally we can combine part of the third term above with the first term, then divide both sides by $2$ and write
\begin{align}
 & \frac{1}{2R}\int_{-\infty}^{+\infty} \int_{|x|<R}\left(\frac{1}{2}|\nabla u|^2+\frac{1}{2}|u_t|^2+\frac{1}{p+1}|u|^{p+1}\right) dx dt + \frac{1}{4R^2} \int_{-\infty}^{+\infty} \int_{|x|=R} |u|^2 d\sigma_R dt\nonumber\\
 & \qquad + \frac{p-3}{2(p+1)R} \int_{-\infty}^{+\infty} \int_{|x|<R} |u|^{p+1} dx dt +\frac{p-1}{2(p+1)} \int_{-\infty}^{+\infty} \int_{|x|>R} \frac{|u|^{p+1}}{|x|} dx dt  \leq E. \label{new morawetz}
\end{align}
Now we have an important observation that the first term in \eqref{new morawetz} is almost $E$ when $R$ is sufficiently large. In fact, the finite speed of propagation implies that for almost all $t \in (-R,R)$, as long as $|t|$ is not too close to $R$, almost all energy concentrates in the region $B(0,R) \doteq \{x\in \Rm^3: |x|<R\}$. This means that the values of other terms have to be very small. More precisely, we can calculate
\begin{align}
\frac{p-1}{2(p+1)} \int_{-\infty}^{+\infty} \int_{|x|>R} \frac{|u|^{p+1}}{|x|} dx dt 
 & \leq E - \frac{1}{2R}\int_{-R}^{+R} \int_{|x|<R}\left(\frac{1}{2}|\nabla u|^2+\frac{1}{2}|u_t|^2+\frac{1}{p+1}|u|^{p+1}\right) dx dt \nonumber\\
 & = \frac{1}{2R}\int_{-R}^{+R} \int_{|x|>R}\left(\frac{1}{2}|\nabla u|^2+\frac{1}{2}|u_t|^2+\frac{1}{p+1}|u|^{p+1}\right) dx dt. \label{key estimate}
\end{align}
The right hand side is exactly the average amount of energy which escapes outside the ball $B(0,R)$ for $t \in [-R,+R]$. Now we calculate carefully the energy outside the ball under additional decay assumption of the initial data. 

\section{An Energy Escaping Estimate}
Our argument relies on
\begin{proposition}
Let $u$ be a solution to (CP1) with a finite energy and satisfy
\[
 I = \int_{\Rm^3} |x|^{\kappa} \left(\frac{1}{2}|\nabla u_0|^2 + \frac{1}{2}|u_1|^2 + \frac{1}{p+1}|u_0|^{p+1}\right) dx < \infty.
\]
Then we have the function
\[
 I(t) = \int_{|x|>t} (|x|-t)^{\kappa} \left(\frac{1}{2}|\nabla u|^2 + \frac{1}{2}|u_t|^2 + \frac{1}{p+1}|u|^{p+1}\right) dx  \leq I, \quad t>0.
\]
\end{proposition}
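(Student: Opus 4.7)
The plan is to show monotonicity by differentiating $I(t)$ in time and using the local energy identity plus radial symmetry. First I would reduce to smooth, compactly supported radial data by a standard density argument; the estimate $I(t) \leq I$ for such data propagates to finite-energy data because both $I$ and $I(t)$ are continuous in the energy norm (the weight $(|x|-t)^\kappa$ is a bounded measurable function on any fixed region where the approximating solutions live, together with a decay factor).

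For such a smooth solution, the local energy identity reads $\partial_t e(u) = \nabla\cdot(u_t\nabla u)$ where $e(u)=\tfrac12|\nabla u|^2+\tfrac12|u_t|^2+\tfrac1{p+1}|u|^{p+1}$. Differentiating $I(t)$ gives two contributions: the derivative hitting the weight produces $-\kappa\int_{|x|>t}(|x|-t)^{\kappa-1}e(u)\,dx$; the derivative hitting the energy density, combined with the divergence theorem on the exterior region $\{|x|>t\}$, produces $-\kappa\int_{|x|>t}(|x|-t)^{\kappa-1}\tfrac{x}{|x|}\cdot(u_t\nabla u)\,dx$. The moving-boundary term at $|x|=t$ vanishes in both integrations because $(|x|-t)^\kappa=0$ there (recall $\kappa>0$), and decay at infinity kills the boundary term there as well. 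Combining,
\[
\frac{d}{dt}I(t) \;=\; -\kappa\int_{|x|>t}(|x|-t)^{\kappa-1}\Bigl[e(u)+u_t\,\tfrac{x}{|x|}\cdot\nabla u\Bigr]dx.
\]

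Now I would invoke the radial symmetry, which is preserved by (CP1): $\nabla u = u_r\,\tfrac{x}{|x|}$ so $\tfrac{x}{|x|}\cdot\nabla u = u_r$ and $|\nabla u|^2=u_r^2$. The bracket therefore becomes
\[
\tfrac12 u_r^2+\tfrac12 u_t^2+u_t u_r+\tfrac1{p+1}|u|^{p+1}=\tfrac12(u_r+u_t)^2+\tfrac1{p+1}|u|^{p+1}\geq 0.
\]
Hence $\frac{d}{dt}I(t)\leq 0$, and integrating from $0$ to $t$ gives $I(t)\leq I(0)=I$.

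The only delicate point is the singularity of $(|x|-t)^{\kappa-1}$ at $|x|=t$ when $\kappa<1$. However, in spherical coordinates the measure $(|x|-t)^{\kappa-1}r^2\,dr\,d\omega$ is locally integrable for any $\kappa>0$, so the derivative formula above is well-defined and the sign argument goes through unchanged. For full rigor one may first work with the regularized weight $((|x|-t)_+ +\varepsilon)^\kappa$ (which gives an extra harmless surface term that vanishes as $\varepsilon\to 0$), or with $(|x|-t)_+^\kappa$ cut off away from $|x|=t$ by a smooth radial bump and then pass to the limit; the non-positivity of the integrand makes either limit routine. The main obstacle I anticipate is therefore bookkeeping of these approximation arguments rather than any conceptual difficulty.
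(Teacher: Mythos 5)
Your proposal is correct and takes essentially the same approach as the paper: differentiate the weighted exterior energy $I(t)$, integrate by parts (with the moving-boundary term at $|x|=t$ vanishing because $\kappa>0$), and conclude that the integrand $e(u)+u_t\,\tfrac{x}{|x|}\cdot\nabla u$ weighted by $(|x|-t)^{\kappa-1}$ has a favorable sign. The only (cosmetic) difference is that you invoke radial symmetry to complete the square, whereas the paper's sign argument needs no radiality --- since $\bigl|u_t\,\tfrac{x}{|x|}\cdot\nabla u\bigr|\le\tfrac12 u_t^2+\tfrac12|\nabla u|^2$ by Cauchy--Schwarz --- and your extra care with the $(|x|-t)^{\kappa-1}$ singularity and the smooth-approximation step is detail the paper simply defers to ``smooth approximation techniques.''
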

\begin{proof}
 It immediately follows a basic calculation of the derivative
 \begin{align*}
  I'(t) = & -\int_{|x|>t} \kappa (|x|-t)^{\kappa-1} \left(\frac{1}{2}|\nabla u|^2 + \frac{1}{2}|u_t|^2 + \frac{1}{p+1}|u|^{p+1}\right) dx \\
  & \qquad + \int_{|x|>t} (|x|-t)^{\kappa} \left(\nabla u \cdot \nabla u_t + u_t u_{tt} + |u|^{p-1}uu_t\right) dx \\
  = & -\int_{|x|>t} \kappa (|x|-t)^{\kappa-1} \left(\frac{1}{2}|\nabla u|^2 + \frac{1}{2}|u_t|^2 + \frac{1}{p+1}|u|^{p+1}\right) dx \\
  & \qquad + \int_{|x|>t} \left\{(|x|-t)^{\kappa} \left(u_t u_{tt} + |u|^{p-1}uu_t\right)-u_t \hbox{\bf div} [(|x|-t)^{\kappa}\nabla u]\right\} dx \\
  = & -\int_{|x|>t} \kappa (|x|-t)^{\kappa-1} \left(\frac{1}{2}|\nabla u|^2 + \frac{1}{2}|u_t|^2 + \frac{1}{p+1}|u|^{p+1}\right) dx \\
  & \qquad + \int_{|x|>t} \left\{(|x|-t)^{\kappa} u_t\left(u_{tt} - \Delta u + |u|^{p-1}u\right)-\kappa u_t (|x|-t)^{\kappa-1} \frac{x}{|x|}\cdot \nabla u \right\} dx\\
  = & -\int_{|x|>t} \kappa (|x|-t)^{\kappa-1} \left(\frac{1}{2}|\nabla u|^2 + \frac{1}{2}|u_t|^2 + u_t \frac{x}{|x|}\cdot \nabla u+ \frac{1}{p+1}|u|^{p+1}\right) dx\leq 0.
 \end{align*}
 Here we have assumed that $u$ is sufficiently smooth. Otherwise we can apply smooth approximation techniques.
\end{proof}
\begin{remark} We can also consider the negative time direction and conclude
\[
 I(t) = \int_{|x|>|t|} (|x|-|t|)^{\kappa} \left(\frac{1}{2}|\nabla u|^2 + \frac{1}{2}|u_t|^2 + \frac{1}{p+1}|w|^{p+1}\right) dx  \leq I, \quad t \in \Rm.
\]
\end{remark}
\paragraph{Energy escaping the ball $B(0,R)$} Now we have ($t\in (-R,R)$)
\begin{align*}
 & \int_{|x|>R}\left(\frac{1}{2}|\nabla u|^2+\frac{1}{2}|u_t|^2+\frac{1}{p+1}|u|^{p+1}\right) dx\\
 & \qquad \leq (R-|t|)^{-\kappa} \int_{|x|>R} (|x|-|t|)^{\kappa} \left(\frac{1}{2}|\nabla u|^2 + \frac{1}{2}|u_t|^2 + \frac{1}{p+1}|w|^{p+1}\right) dx\\
 & \qquad \leq (R-|t|)^{-\kappa} I(t) \\
 & \qquad \leq (R-|t|)^{-\kappa} I.
\end{align*}
Combining this inequality with \eqref{key estimate}, we obtain the decay rate of space-time integral of $|u|^{p+1}/|x|$.
\begin{equation}\label{decay rate}
  \int_{-\infty}^{+\infty} \int_{|x|>R} \frac{|u|^{p+1}}{|x|} dx dt \lesssim_{p,\kappa} I R^{-\kappa}. 
\end{equation}

\section{Completion of the Proof}

Now we need the following point-wise estimate on solutions 
\begin{lemma} \label{best decay}
 If a radial function $u$ satisfies 
 \[
   \int_{\Rm^3} (|\nabla u|^2 + |u|^{p+1}) dx \leq E,
 \]
 then we have $|u(x)| \lesssim_{p} E^{2/(p+3)} |x|^{-4/(p+3)} $.
\end{lemma}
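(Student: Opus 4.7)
The plan is to deduce the pointwise bound from a one-dimensional fundamental-theorem-of-calculus identity in the radial variable, combined with a single Cauchy–Schwarz application, in the spirit of Strauss-type radial embeddings. By density I may assume $u$ is smooth, radial, and decays at infinity (e.g.\ via the classical bound $|u(r)|\lesssim r^{-1/2}\|u\|_{\dot H^1}$), so that $u(r)\to 0$ as $r\to\infty$.

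I would start by choosing an exponent $\beta>1$ and writing
\[
 |u(r)|^\beta \;=\; -\int_r^\infty \frac{d}{ds}|u(s)|^\beta\,ds \;\le\; \beta \int_r^\infty |u(s)|^{\beta-1}\,|u'(s)|\,ds.
\]
Inserting the weight $s^{-1}\cdot s$ and applying Cauchy–Schwarz gives
\[
 |u(r)|^\beta \;\le\; \beta \left(\int_r^\infty s^{-2}|u(s)|^{2(\beta-1)}\,ds\right)^{1/2}\left(\int_r^\infty s^2 |u'(s)|^2\,ds\right)^{1/2}.
\]
The second factor is $\lesssim E^{1/2}$ after recalling that for radial functions $\int_{\mathbb{R}^3}|\nabla u|^2\,dx=4\pi\int_0^\infty s^2|u'(s)|^2\,ds$.

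To align the integrand of the first factor with the $L^{p+1}$-energy, I would select $\beta=(p+3)/2$, so that $2(\beta-1)=p+1$. Then, for $s\ge r$, the elementary inequality $s^{-2}\le r^{-4}\,s^{2}$ yields
\[
 \int_r^\infty s^{-2}|u(s)|^{p+1}\,ds \;\le\; r^{-4}\int_r^\infty s^2|u(s)|^{p+1}\,ds \;\lesssim\; r^{-4}\,E.
\]
Substituting back produces $|u(r)|^{(p+3)/2}\lesssim_p r^{-2}E$, which is equivalent to the claimed estimate $|u(r)|\lesssim_p E^{2/(p+3)}|r|^{-4/(p+3)}$.

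No real obstacle is expected: the argument is a short and standard weighted Sobolev-type inequality for radial functions. The only mildly delicate point is justifying the decay $u(r)\to 0$ at infinity used in the fundamental-theorem-of-calculus step, which I would handle by approximating $u$ with Schwartz radial functions and passing to the limit using the continuity of both sides in the $\dot H^1\cap L^{p+1}$ topology.
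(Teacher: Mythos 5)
Your proof is correct, but it takes a genuinely different route from the paper's. You establish the bound as a single global weighted Strauss-type inequality: apply the fundamental theorem of calculus to $|u|^{\beta}$ with $\beta=(p+3)/2$ calibrated so that $2(\beta-1)=p+1$, split the integrand as $\bigl(s^{-1}|u|^{(p+1)/2}\bigr)\bigl(s|u'|\bigr)$, apply Cauchy--Schwarz once, and use the elementary pointwise inequality $s^{-2}\leq r^{-4}s^{2}$ for $s\geq r$ to convert the first factor into the $L^{p+1}$ part of the energy. The paper instead argues by persistence of size: setting $S=|u(r_0)|$, it uses the fundamental theorem of calculus and Cauchy--Schwarz only \emph{locally} to show $|u|\geq S/2$ on the interval $\bigl(r_0,\,r_0+r_0^2S^2/4E\bigr)$, then lower-bounds $\int |u|^{p+1}r^2\,dr$ over that interval against $E$, which yields $S^{p+3}r_0^4\lesssim_p E^2$ and hence the same estimate. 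Both arguments combine the same two ingredients, $\int_0^\infty s^2|u'(s)|^2\,ds\lesssim E$ and $\int_0^\infty s^2|u(s)|^{p+1}\,ds\lesssim E$, via one application of Cauchy--Schwarz; the difference is that your inequality is global in $s\in(r,\infty)$ while the paper's is confined to a bounded interval near $r_0$. What your route buys is a cleaner, more mechanical derivation with no auxiliary quantity $S$ and no interval construction. What it costs is the boundary term: you need $|u(s)|^{\beta}\to 0$ as $s\to\infty$, which you correctly flag as the one delicate point. In fact this can be closed without any Schwartz approximation, using only your own second factor: $|u(r)-u(R)|\leq \bigl(\int_r^R s^2|u'|^2\,ds\bigr)^{1/2}\bigl(\int_r^R s^{-2}\,ds\bigr)^{1/2}\lesssim E^{1/2}r^{-1/2}$ shows $u(s)$ is Cauchy as $s\to\infty$, and the limit must vanish since a nonzero limit would make $\int s^2|u|^{p+1}\,ds$ diverge. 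The paper's local argument avoids behavior at infinity entirely, which makes it marginally more robust, but the two proofs are of comparable length and yield identical conclusions.
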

\begin{proof}
Let $|u(r_0)| = S$. For any $r\in(r_0,r_0+r_0^2 S^2/4E)$ we have
\begin{align*}
 |u(r)-u(r_0)| \leq &\int_{r_0}^r |u_r (s)| ds \leq \left(\int_{r_0}^r s^2 |u_r(s)|^2 ds\right)^{1/2}\left(\int_{r_0}^r s^{-2} ds\right)^{1/2} \\
 \leq & E^{1/2}\left(\frac{1}{r_0}-\frac{1}{r}\right)^{1/2} \leq \left[E\cdot\frac{r_0^2 S^2/4E}{r_0 r} \right]^{1/2} \leq \frac{S}{2}.
\end{align*}
Therefore $u$ satisfies $|u(r)| \geq S/2$ for all $r\in(r_0,r_0+r_0^2 S^2/4E)$. Now we use the $L^{p+1}$ bound 
\[
 \left(\frac{S}{2}\right)^{p+1} r_0^2 \cdot \frac{r_0^2 S^2}{4E} \leq \int_{r_0}^{r_0+\frac{r_0^2 S^2}{4E}} |u(r)|^{p+1} r^2 dr \lesssim \int_{\Rm^3} |u(x)|^{p+1} dx \leq E.
\]
This immediately gives the pointwise estimate.
\end{proof}
\paragraph{Global Integral Estimate} We start by applying Lemma \ref{best decay} and obtain
\begin{equation}\label{p minus 3}
 |u(x,t)|^{2(p-1)} = |u(x,t)|^{p-3} \cdot |u(x,t)|^{p+1} \lesssim_{p} E^{\frac{2(p-3)}{p+3}} |x|^{-\frac{4(p-3)}{p+3}} \cdot |u(x,t)|^{p+1}. 
\end{equation}
We use the inequality above, recall the decay rate estimate \eqref{decay rate} and deduce
\begin{align*}
 \int_{-\infty}^{\infty} \int_{|x|>R} \frac{|u(x,t)|^{2(p-1)}}{|x|^{\frac{3(5-p)}{p+3}}} dx dt & \lesssim_p \int_{-\infty}^{\infty} \int_{|x|>R} E^{\frac{2(p-3)}{p+3}} |x|^{-\frac{4(p-3)}{p+3}} \cdot \frac{|u(x,t)|^{p+1}}{|x|^{\frac{3(5-p)}{p+3}}} dx dt\\
& = E^{\frac{2(p-3)}{p+3}} \int_{-\infty}^{\infty} \int_{|x|>R} \frac{|u(x,t)|^{p+1}}{|x|} dx dt\\
& \lesssim_{p,\kappa}  E^{\frac{2(p-3)}{p+3}} I R^{-\kappa}.
\end{align*}
Since $\kappa>\frac{3(5-p)}{p+3}$, the inequality above implies  
\begin{equation} \label{outer estimate}
 \int_{-\infty}^{\infty} \int_{|x|>R} |u(x,t)|^{2(p-1)} dx dt \lesssim_{p,\kappa} E^{\frac{2(p-3)}{p+3}} I R^{-\left(\kappa-\frac{3(5-p)}{p+3}\right)}.
\end{equation}
This gives a finite upper bound for the integral of $|u|^{2(p-1)}$ in the region with large $x$. In order to find an upper bound of the integral in the region with small $x$, we can use \eqref{p minus 3} again and obtain
\[
 \int_{-\infty}^{\infty} \int_{|x|<R} \frac{|u(x,t)|^{2(p-1)}}{|x|^{\frac{3(5-p)}{p+3}}} dx dt \lesssim_p E^{\frac{2(p-3)}{p+3}} \int_{-\infty}^{\infty} \int_{|x|<R} \frac{|u(x,t)|^{p+1}}{|x|} dx dt \lesssim_p E^{\frac{3(p-1)}{p+3}}
\]
As a result we have
\begin{equation} \label{inner estimate}
 \int_{-\infty}^{\infty} \int_{|x|<R} |u(x,t)|^{2(p-1)} dx dt \lesssim_p E^{\frac{3(p-1)}{p+3}} R^{\frac{3(5-p)}{p+3}}.
\end{equation}
We choose an arbitrary $R>0$, combine \eqref{outer estimate} and \eqref{inner estimate} and finally conclude
\[
 \int_{-\infty}^{\infty} \int_{\Rm^3} |u(x,t)|^{2(p-1)} dx dt \leq C(p,\kappa,E,I) < +\infty.
\]
This finishes the proof.


\begin{thebibliography}{99}
 \bibitem{cubic3dwave} B. Dodson and A. Lawrie. {``Scattering for the radial 3d cubic wave equation.''} \textit{Analysis and PDE}, 8(2015): 467-497.
 \bibitem{dkm2} T. Duyckaerts, C.E. Kenig, and F. Merle. {``Scattering for radial, bounded solutions of focusing supercritical wave equations.''} \textit{International Mathematics Research Notices} 2014:  224-258.
 \bibitem{conformal2} J. Ginibre, and G. Velo. {``Conformal invariance and time decay for nonlinear wave equations.''} \textit{Annales de l'institut Henri Poincar\'{e} (A) Physique th\'{e}orique} 47(1987), 221-276.
 \bibitem{mg1} M. Grillakis. {``Regularity and asymptotic behaviour of the wave equation with critical nonlinearity.''} \textit{Annals of Mathematics} 132(1990): 485-509.
\bibitem{conformal} K. Hidano. {``Conformal conservation law, time decay and scattering for nonlinear wave equation''} \textit{Journal D'analysis Math\'{e}matique} 91(2003): 269-295.
\bibitem{kenig1} C. E. Kenig, and F. Merle. {``Global well-posedness, scattering and blow-up for the energy critical, focusing, non-linear Schr\"{o}dinger equation in the radial case.''} \textit{Inventiones Mathematicae} 166(2006): 645-675.
 \bibitem{km} C. E. Kenig, and F. Merle. {``Nondispersive radial solutions to energy supercritical non-linear wave equations, with applications.''} \textit{American Journal of Mathematics} 133, No 4(2011): 1029-1065.
 \bibitem{kv2} R. Killip, and M. Visan. {``The defocusing energy-supercritical nonlinear wave equation in three space dimensions''} \textit{Transactions of the American Mathematical Society}, 363(2011): 3893-3934.
 \bibitem{ls} H. Lindblad, and C. Sogge. {``On existence and scattering with minimal regularity for semi-linear wave equations''} \textit{Journal of Functional Analysis} 130(1995): 357-426.
 \bibitem{benoit} B. Perthame, and L. Vega. {``Morrey-Campanato estimates for Helmholtz equations.''} \textit{Journal of Functional Analysis} 164(1999): 340-355.
 \bibitem{shen2} R. Shen. {``On the energy subcritical, nonlinear wave equation in $\Rm^3$ with radial data''}  \textit{Analysis and PDE} 6(2013): 1929-1987.
 \bibitem{shen3} R. Shen. {``Scattering of solutions to the defocusing energy subcritical semi-linear wave equation in 3D''} \textit{Communications in Partial Differential Equations} 42(2017): 495-518.
\end{thebibliography}
\end{document}